\newtheorem{theorem}{Theorem}[section]
\newtheorem{corollary}[theorem]{Corollary}
\newtheorem{lemma}[theorem]{Lemma}
\def\PG{\mbox{\rm PG}}
\begin{document}
\title{A characterization of the planes meeting a hyperbolic quadric of $\PG(3,q)$ in a conic}
\author{Bikramaditya Sahu}
%\date{}
\maketitle

\begin{abstract}
In this article, a combinatorial characterization of the family of planes of $\PG(3,q)$ which meet a hyperbolic quadric in an irreducible conic, using their intersection properties with the points and lines of $\PG(3,q)$, is given.\\

{\bf Keywords:} Projective space, Hyperbolic quadric, Irreducible conic, Combinatorial characterization\\

{\bf AMS 2010 subject classification:} 05B25, 51E20
\end{abstract}

\section{Introduction}
Let $\PG(n,q)$ denote the $n$-dimensional Desarguesian projective space defined over a finite field of order $q$, where $q$ is a prime power. The quadrics of $\PG(n,q)$ are very interesting objects with many combinatorial properties. One of the important properties of quadrics in $\PG(n,q)$ is that subspaces can only meet a quadric in certain ways. Therefore, we may form families of subspaces that all meet a particular quadric in the same way and then we can give a characterization of that family. 

A characterization of the family of planes meeting a non-degenerate quadric in $\PG(4,q)$ is given in \cite{But}. In a series of two recent papers \cite{BHJ,BHJS}, characterizations of elliptic hyperplanes and hyperbolic hyperplanes in $\PG(4,q)$ are given. A characterization of the family of external lines to a hyperbolic quadric in $\PG(3,q)$ was given in \cite{DGDO} for all $q$ (also see \cite{IZZ} for a different characterization in terms of a point-subset of the Klein quadric in $\PG(5,q)$). In a recent paper \cite{PS}, a characterization of secant lines to a hyperbolic quadric is given for all odd $q$, $q\geq 7$.  In this paper, we give a characterization of the family of planes meeting a hyperbolic quadric in $\PG(3,q)$ in an irreducible conic.

Let $\mathcal{Q}$ be a hyperbolic quadric in $\PG(3,q)$, that is, a non-degenerate quadric of Witt index two. One can refer to \cite{Hir} for the basic properties of the points, lines and planes of $\PG(3,q)$ with respect to $\mathcal{Q}$. The quadric $\mathcal{Q}$ consists of $(q+1)^2$ points and $2(q+1)$ lines. Every plane of $\PG(3,q)$ meets $\mathcal{Q}$ in an irreducible conic or in two intersecting generators. In the first case we call the plane as a {\it secant} plane, otherwise, a {\it tangent} plane. There are $q^3-q$ secant planes and $(q+1)^2$ tangent planes. Each point of $\PG(3,q)\setminus \mathcal{Q}$ lies on $q^2$ secant planes and each point of $\mathcal{Q}$ lies on $q^2-q$ secant planes. Each line of $\PG(3,q)$ lies on $0$, $q-1$, $q$ or $q+1$  secant planes. 

In this paper, we prove the following characterization theorem.

\begin{theorem} \label{main}
Let $\Sigma$ be a non empty family of planes of $\PG(3,q)$, for which the following properties are satisfied:
\begin{enumerate}
\item [(P1)] Every point of $\PG(3,q)$ lies on $q^2-q$ or $q^2$ planes of $\Sigma$.
\item [(P2)] Every line of $\PG(3,q)$ lies on $0$, $q-1$, $q$ or $q+1$ planes of $\Sigma$.
\end{enumerate}
Then $\Sigma$ is the set of all planes of $\PG(3,q)$ meeting a hyperbolic quadric in an irreducible conic.
\end{theorem}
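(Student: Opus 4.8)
The plan is to recover a hyperbolic quadric as the set of ``heavy'' points of $\Sigma$ and then identify $\Sigma$ with its secant planes. Call a point \emph{black} if it lies on $q^2-q$ planes of $\Sigma$ and \emph{white} if it lies on $q^2$ planes of $\Sigma$; let $\mathcal{B}$ be the set of black points, put $s=|\Sigma|$, and for a line $\ell$ let $\sigma(\ell)$ be the number of planes of $\Sigma$ through $\ell$, so $\sigma(\ell)\in\{0,q-1,q,q+1\}$ by (P2). First I would run two double counts. Counting pairs $(\ell,\pi')$ with $\ell\subseteq\pi\cap\pi'$ and $\pi'\in\Sigma$ gives $\sum_{\ell\subseteq\pi}\sigma(\ell)=s$ if $\pi\notin\Sigma$ and $\sum_{\ell\subseteq\pi}\sigma(\ell)=s+q^2+q$ if $\pi\in\Sigma$; substituting this into the identity $(q+1)\sum_{\ell\subseteq\pi}\sigma(\ell)=\sum_{P\in\pi}\sum_{\ell\ni P,\ \ell\subseteq\pi}\sigma(\ell)$ and using (P1) shows that $|\mathcal{B}\cap\pi|$ equals the constant $c:=(q+1)(q^3-s)/q$ for every $\pi\in\Sigma$ and the constant $c+q$ for every $\pi\notin\Sigma$; thus $\mathcal{B}$ is a two-intersection set with respect to planes. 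Similarly, counting $\sum_{\pi\supseteq\ell}|\mathcal{B}\cap\pi|$ once by planes and once by points of $\mathcal{B}$ yields $|\mathcal{B}\cap\ell|=d+(q+1)-\sigma(\ell)$ for every line $\ell$, where $d:=(c(q+1)-|\mathcal{B}|)/q$ is a constant; hence $|\mathcal{B}\cap\ell|\in\{d,\,d+1,\,d+2,\,d+q+1\}$ according as $\sigma(\ell)\in\{q+1,\,q,\,q-1,\,0\}$.

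Next I would pin down the parameters. The incidence count $\sum_{\pi\ni P}|\mathcal{B}\cap\pi|=|\mathcal{B}|(q+1)$ at a white point $P$, combined with the previous step, forces $(q+1)\mid c$; writing $c=(q+1)(d+1)$ one obtains $|\mathcal{B}|=(d+1)(q^2+q+1)+q$ and $s=q(q^2-1-d)$, and a short check (using $0\le c$, together with the fact that $\mathcal{B}\ne\varnothing$ while a black point lies on $q^2-q\ge1$ planes of $\Sigma$, which is impossible if $c=0$) gives $d\ge0$. The crucial point is then that $d=0$. If $d\ge1$, then for every line $\ell$ the bound $|\mathcal{B}\cap\ell|\le q+1$ forces $\sigma(\ell)\ge d>0$, hence $\sigma(\ell)\ge q-1$ and so $|\mathcal{B}\cap\ell|\le d+2$; choosing in an arbitrary plane $\rho$ a black point $P$ (one exists since $|\mathcal{B}\cap\rho|\ge c>0$) and bounding the black points of $\rho$ by running over the $q+1$ lines of $\rho$ through $P$ then gives $|\mathcal{B}\cap\rho|\le 1+(q+1)(d+1)=c+1<c+q$, so $|\mathcal{B}\cap\rho|=c$ and $\rho\in\Sigma$; as $\rho$ was arbitrary this makes $\Sigma$ the set of all planes, contradicting (P1). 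Hence $d=0$, so $c=q+1$, $|\mathcal{B}|=(q+1)^2$, $s=q^3-q$, and $|\mathcal{B}\cap\ell|=q+1-\sigma(\ell)\in\{0,1,2,q+1\}$.

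It remains to show that $\mathcal{B}$ is a hyperbolic quadric and that $\Sigma$ is exactly its set of secant planes. A plane $\rho\notin\Sigma$ meets $\mathcal{B}$ in $2q+1>q+2$ points, so $\mathcal{B}\cap\rho$ is not an arc and contains a line $\ell_1\subseteq\mathcal{B}$; a short argument (each of the $q$ points of $\mathcal{B}\cap\rho$ off $\ell_1$, joined to a second such point, gives a line meeting $\mathcal{B}\cap\rho$ in at least three points, hence a line of $\mathcal{B}$, which must then contain all of them) shows $\mathcal{B}\cap\rho$ is the union of two lines meeting in a point. Since a line of $\mathcal{B}$ has $\sigma=0$, it lies in $q+1$ planes, all outside $\Sigma$, and each of the $(q+1)^2$ planes outside $\Sigma$ contains exactly two lines of $\mathcal{B}$; double counting gives exactly $2(q+1)$ lines contained in $\mathcal{B}$, and a parallel count through a black point $P$ (using the $2q+1$ planes through $P$ outside $\Sigma$) shows $P$ lies on $r_P\in\{2,3\}$ of them, while $\sum_{P\in\mathcal{B}}r_P=2(q+1)^2$ forces $r_P=2$ for all $P$. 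Fixing a line $\ell_0\subseteq\mathcal{B}$, the $q+1$ lines of $\mathcal{B}$ meeting it are pairwise skew, since two of them together with $\ell_0$ would span a plane carrying a triangle of three lines of $\mathcal{B}$, that is at least $3q>2q+1$ points of $\mathcal{B}$; iterating this, the $2(q+1)$ lines of $\mathcal{B}$ split into two families of $q+1$ mutually skew lines with every line of one family meeting every line of the other, and three lines of one family determine a regulus whose opposite regulus is the other family. Hence $\mathcal{B}$ is the union of a regulus, a hyperbolic quadric $\mathcal{Q}$, and by the parameter analysis the planes of $\Sigma$ are precisely those meeting $\mathcal{Q}$ in $q+1$ points; such a section is a $(q+1)$-arc, since no line of a secant plane can lie in $\mathcal{Q}$, hence an irreducible conic, while conversely any plane meeting $\mathcal{Q}$ in an irreducible conic has $q+1$ points of $\mathcal{Q}$ and therefore lies in $\Sigma$.

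The main obstacle I anticipate is the middle step: passing from the one-parameter family of numerically consistent configurations down to $d=0$ (equivalently, determining $|\Sigma|$) — this is exactly where a purely counting approach stalls, and the argument above resolves it by showing that any $d\ge1$ would force every plane of $\PG(3,q)$ into $\Sigma$. Once the parameters are fixed, the identification of $\mathcal{B}$ with a hyperbolic quadric is the classical regulus argument, the only delicate inputs being the two-line structure of $\mathcal{B}$ inside a plane outside $\Sigma$ and the ``triangle forces $3q$ points'' observation.
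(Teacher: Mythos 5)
Your proposal is correct, and it reaches the same intermediate milestones as the paper (a set $\mathcal{B}$ of black points meeting every plane of $\Sigma$ in a constant $c$ and every other plane in $c+q$ points; $c=q+1$, $|\mathcal{B}|=(q+1)^2$, $|\Sigma|=q^3-q$; the two-line structure of $\mathcal{B}$ in planes outside $\Sigma$; exactly two black lines per black point; and finally a hyperbolic quadric), but the route through the crucial middle step is genuinely different and, in my view, cleaner. Where the paper pins down $|\Sigma|$ via global divisibility constraints (its Lemma on $(q+1)\mid b$, resp.\ $\frac{q+1}{2}\mid b$) followed by a case analysis split over $q$ even/odd that separately eliminates $r=q-1$, $r=(q\pm1)/2$ (including an ad hoc check at $q=5$), you derive the line formula $|\mathcal{B}\cap\ell|=d+(q+1)-\sigma(\ell)$ and observe that $d\ge1$ would force $\sigma(\ell)\ge q-1$ for every line, hence $|\mathcal{B}\cap\ell|\le d+2$, hence $|\mathcal{B}\cap\rho|\le 1+(q+1)(d+1)=c+1<c+q$ for every plane, putting every plane in $\Sigma$ and contradicting (P1). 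I checked this argument and it is sound; it buys you a uniform, parity-free proof of $r=1$ and simultaneously subsumes the paper's Lemma relating tangent planes through a line to black points on it. Two further divergences: you get ``exactly two black lines per black point'' from $r_P\in\{2,3\}$ plus the global count $\sum_P r_P=2(q+1)^2$, where the paper argues $r_P\le 2$ directly from counting tangent planes through a point; and you finish with the classical two-regulus argument, where the paper invokes the classification of projective generalized quadrangles of order $(q,1)$ from Payne--Thas. Both finishes are standard and valid. The only point worth tightening in a write-up: your derivation of $(q+1)\mid c$ is phrased ``at a white point,'' whose existence you have not yet established at that stage; this is harmless, since the same incidence count at a black point yields the identical identity $|\mathcal{B}|(q+1)=(q^2+q+1)c+q(q+1)$, so the conclusion holds regardless.
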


\section{Preliminaries}

Let $\Sigma$ be a nonempty set of planes of $\PG(3,q)$ for which the properties (P1) and (P2) stated in Theorem \ref{main} hold. A point of $\PG(3,q)$ is said to be {\it black} or {\it white} according as it contained in $q^2-q$ or $q^2$ planes of $\Sigma$. Let $b$ and $w$, respectively, denote the number of black points and the number of white points in $\PG(3,q)$. We have 
\begin{equation}\label{eq-1}
b+w=q^3+q^2+q+1.
\end{equation}

Counting in two ways the point-plane incident pairs,
$$\{(x,\pi)\ | \ x \ \mbox{is a point in $\PG(3,q)$}, \ \pi \in \Sigma \ \mbox{and}\ x\in \pi \},$$
we get  
\begin{equation}\label{eq-2}
b(q^2-q)+w q^2=|\Sigma|(q^2+q+1).
\end{equation}

Again, counting in two ways the incident triples,
$$\{(x,\pi,\sigma)\ | \ x \ \mbox{is a point in $\PG(3,q)$}, \ \pi,\sigma \in \Sigma, \pi\neq \sigma \ \mbox{and}\ x\in \pi\cap\sigma \},$$
we get  
\begin{equation}\label{eq-3}
b(q^2-q)(q^2-q-1)+wq^2(q^2-1)=|\Sigma|(|\Sigma|-1)(q+1).
\end{equation}

If $q$ is odd, then dividing both sides of Equation (\ref{eq-3}) by $2$ to get the following equality. 
\begin{equation}\label{eq-3'}
bq(q-1)/2(q^2-q-1)+wq^2(q-1)(q+1)/2=|\Sigma|(|\Sigma|-1)(q+1)/2.
\end{equation}

\begin{lemma}\label{lem-b}
\begin{enumerate}
\item[(i)] If $q$ is even, then $q+1$ divides $b$.
\item[(ii)] If $q$ is odd, then $(q+1)/2$ divides $b$.
\end{enumerate}
\end{lemma}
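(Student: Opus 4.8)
The plan is to read off the divisibility of $b$ directly from the counting identities (\ref{eq-3}) and (\ref{eq-3'}) by reducing them modulo $q+1$ (in the even case) and modulo $(q+1)/2$ (in the odd case). The one structural fact needed is that, by (\ref{eq-1}),
$b+w=q^3+q^2+q+1=(q+1)(q^2+1)$,
so that $w\equiv -b$ modulo every divisor of $q+1$; this is what will let us eliminate $w$ from the congruences. Also useful are the elementary congruences $q^2\equiv 1$, $q^2-q\equiv 2$ and $q^2-q-1\equiv 1$ modulo $q+1$, which follow from $q\equiv-1$.

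For part (i), suppose $q$ is even and reduce (\ref{eq-3}) modulo $q+1$. The right-hand side $|\Sigma|(|\Sigma|-1)(q+1)$ vanishes, and on the left the term $wq^2(q^2-1)$ also vanishes, since $q+1\mid q^2-1$. The remaining term $b(q^2-q)(q^2-q-1)$ reduces, via the congruences above, to $2b$. Hence $q+1$ divides $2b$, and since $q$ is even the integer $q+1$ is odd, hence coprime to $2$; therefore $q+1$ divides $b$.

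For part (ii), suppose $q$ is odd. Then $q-1$, $q^2-1$ and $|\Sigma|(|\Sigma|-1)$ are all even, so (\ref{eq-3'}) is a genuine identity between integers, and reduction modulo $(q+1)/2$ is legitimate. Put $k=(q+1)/2$, so that $q+1=2k$ and, because $q-1=2(k-1)$, one has $(q-1)/2=k-1$ and $\tfrac12(q-1)(q+1)=(q-1)k$. Reducing (\ref{eq-3'}) modulo $k$: the right-hand side equals $|\Sigma|(|\Sigma|-1)k\equiv 0$, and the term $\tfrac12 wq^2(q-1)(q+1)=wq^2(q-1)k\equiv 0$; the surviving term $\tfrac12 bq(q-1)(q^2-q-1)=bq(k-1)(q^2-q-1)$ reduces, using $q\equiv -1$, $k-1\equiv-1$ and $q^2-q-1\equiv 1$ modulo $k$, to $b$. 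Thus $k$ divides $b$, i.e. $(q+1)/2$ divides $b$.

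I do not expect a real obstacle in this lemma; the computation is routine once the factorization $b+w=(q+1)(q^2+1)$ is noticed. The only point demanding a little care is the one in part (ii) about the parity of the terms of (\ref{eq-3'})\,---\,precisely the remark recorded just before the statement\,---\,which is what guarantees that the passage to (\ref{eq-3'}) and the subsequent reduction modulo $(q+1)/2$ make sense; and one should double-check the signs in the modular reductions so as not to lose the factor $2$ in part (i) that forces the stronger conclusion there.
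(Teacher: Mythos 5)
Your proof is correct and follows essentially the same route as the paper: both arguments extract the divisibility of $b$ from Equations (\ref{eq-3}) and (\ref{eq-3'}), the paper by noting that $q+1$ (resp.\ $(q+1)/2$) is coprime to each of $q$, $q-1$ (resp.\ $(q-1)/2$) and $q^2-q-1$, and you by the equivalent device of reducing the same equations modulo $q+1$ (resp.\ $(q+1)/2$) and observing that the cofactor of $b$ reduces to a unit. The only cosmetic difference is that your appeal to $b+w=(q+1)(q^2+1)$ is not actually needed, since the $w$-term already vanishes because of its explicit factor $q^2-1$.
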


\begin{proof}
Since $q^2-q-1=(q+11)(q-2)+1$, we have $q+1$ is co-prime to $q^2-q-1$. Also $q+1$ is co-prime to $q$ being consecutive integers. If $q$ is even, then $q+1$ and $q-1$ are both odd and their difference is two and so they are co-prime to each other. If $q$ is odd, then $(q+1)/2$ and $(q-1)/2$ are co-prime being consecutive integers (also $(q+1)/2$ and $q$ are co-prime). Now (i) and (ii) follows from Equations (\ref{eq-3}) and (\ref{eq-3'}), respectively.
\end{proof}

\begin{lemma}\label{lem-black-fixed}
Every plane of $\Sigma$ contains a fixed number of black points.
\end{lemma}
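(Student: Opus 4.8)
The plan is to fix a plane $\pi\in\Sigma$, write $b_\pi$ for the number of black points lying on $\pi$, and derive a linear equation for $b_\pi$ whose coefficients depend only on $q$ and $|\Sigma|$. The device is to count in two ways the set of incident pairs
$$\{(x,\sigma)\ | \ x \ \text{is a point of}\ \pi,\ \sigma\in\Sigma,\ \sigma\neq\pi,\ x\in\sigma\}.$$
First fix a point $x$ of $\pi$. By (P1), $x$ lies on $q^2-q$ or $q^2$ planes of $\Sigma$ according as $x$ is black or white, and since $\pi$ itself is one of these, the number of $\sigma\in\Sigma$ with $\sigma\neq\pi$ and $x\in\sigma$ equals $q^2-q-1$ or $q^2-1$ respectively. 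Summing over the $q^2+q+1$ points of $\pi$, of which $b_\pi$ are black, the number of pairs is
$$b_\pi(q^2-q-1)+(q^2+q+1-b_\pi)(q^2-1).$$

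For the other count, fix $\sigma\in\Sigma$ with $\sigma\neq\pi$. In $\PG(3,q)$ two distinct planes meet in a line, so $\pi\cap\sigma$ is a line and contributes exactly $q+1$ of its points to the count; summing over the $|\Sigma|-1$ such planes $\sigma$, the number of pairs is $(|\Sigma|-1)(q+1)$. Equating the two expressions and expanding, the terms in $b_\pi$ combine to $b_\pi\big((q^2-q-1)-(q^2-1)\big)=-q\,b_\pi$, so
$$-q\,b_\pi+(q^2+q+1)(q^2-1)=(|\Sigma|-1)(q+1).$$
Since the coefficient of $b_\pi$ is $-q\neq 0$, this equation has a unique solution for $b_\pi$, and that solution involves only $q$ and $|\Sigma|$. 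Hence $b_\pi$ is the same for every $\pi\in\Sigma$, which is the assertion of the lemma.

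There is essentially no obstacle here: the argument is a short double count, and the only point worth checking is that after simplification the coefficient of $b_\pi$ really is the nonzero quantity $-q$ (had it vanished, the identity would merely be a restatement of Equation (\ref{eq-2}) and would carry no information about individual planes). The same computation can alternatively be extracted from the global triple count of Equation (\ref{eq-3}) by isolating the contribution of the fixed plane $\pi$, but the direct local count given above is the most transparent way to present it.
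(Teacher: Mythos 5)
Your proof is correct and uses the same double count as the paper (the pairs $(x,\sigma)$ with $x\in\pi\cap\sigma$, $\sigma\in\Sigma\setminus\{\pi\}$), arriving at the identical equation $b_\pi(q^2-q-1)+w_\pi(q^2-1)=(|\Sigma|-1)(q+1)$. The only difference is cosmetic: you solve the linear equation directly via the nonzero coefficient $-q$, whereas the paper first notes that $\gcd(q+1,\,q^2-q-1)=1$ forces $q+1\mid b_\pi$ and writes $b_\pi=(q+1)r$, a normalization it reuses in later lemmas; both routes determine $b_\pi$ uniquely from $q$ and $|\Sigma|$.
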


\begin{proof}
Let $\pi$ be a plane in $\Sigma$. Let $b_\pi$ and $w_\pi$, respectively, denote the number of black and white points in $\pi$. Then $w_\pi=q^2+q+1-b_\pi$.

By counting the incident point-plane pairs of the following set,
$$\{(x,\sigma)\ | \ x \ \mbox{is a point in $\PG(3,q)$}, \ \sigma \in \Sigma \ , \sigma\neq \pi \ \mbox{and} \ x\in \pi\cap \sigma \},$$ we get\\ 
$$b_\pi(q^2-q-1)+w_\pi(q^2-1)=(|\Sigma|-1)(q+1).$$
Since $q+1$ is co-prime to $q^2-q-1$ (see the proof of Lemma \ref{lem-b}), it follows from the above equation that $q+1$ divides $b_\pi$. Let $b_\pi=(q+1)r_\pi$ for some $0\leq r_\pi\leq q$. 

Since $w_\pi=q^2+q+1-b_\pi=q^2+q+1-(q+1)r_\pi$, by a simple calculation, we get  
$$q^3-qr_\pi=|\Sigma|.$$
Since $|\Sigma|$ is a fixed number, $r_\pi:=r$ is fixed. Hence $b_\pi=(q+1)r$ is a fixed number. 
\end{proof}

From the proof of Lemma \ref{lem-black-fixed},  
\begin{equation}\label{eq-5}
q^3-qr=|\Sigma|,
\end{equation}
where the number of black points in a plane in $\Sigma$ is $(q+1)r$ for some $0\leq r\leq q$. 

\begin{lemma}\label{black-tangent}
Any plane of $\PG(3,q)$ not in $\Sigma$ contains $q+(q+1)r$ black points. 
\end{lemma}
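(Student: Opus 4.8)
The plan is to mimic the double-counting argument used in the proof of Lemma~\ref{lem-black-fixed}, but now applied to a plane $\pi\notin\Sigma$, so that \emph{every} plane of $\Sigma$ (rather than all but one) cuts out a genuine line on $\pi$. Write $b_\pi$ for the number of black points of $\pi$ and $w_\pi=q^2+q+1-b_\pi$ for the number of white points of $\pi$.

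First I would count in two ways the incident pairs $\{(x,\sigma)\mid x\text{ a point of }\pi,\ \sigma\in\Sigma,\ x\in\sigma\}$. Summing over the points of $\pi$ and using property (P1) gives $b_\pi(q^2-q)+w_\pi q^2$; on the other hand, since $\pi\notin\Sigma$, each $\sigma\in\Sigma$ meets $\pi$ in a line, hence in exactly $q+1$ points, so the same count equals $|\Sigma|(q+1)$. This yields
\begin{equation*}
b_\pi(q^2-q)+w_\pi q^2=|\Sigma|(q+1).
\end{equation*}
Next I would substitute $w_\pi=q^2+q+1-b_\pi$ together with the value $|\Sigma|=q^3-qr$ from Equation~(\ref{eq-5}); after cancelling the common factor $q$, this linear equation solves uniquely for $b_\pi=q+(q+1)r$, as claimed.

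The computation is entirely routine, so I do not expect a genuine obstacle here; the only point requiring care is the observation that $\pi\notin\Sigma$ forces $\sigma\cap\pi$ to be a line for \emph{all} $\sigma\in\Sigma$, which is precisely what changes the right-hand constant from $(|\Sigma|-1)(q+1)$ in Lemma~\ref{lem-black-fixed} to $|\Sigma|(q+1)$ here, and hence adds the extra summand $q$ to the count of black points.
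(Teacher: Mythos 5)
Your proof is correct and is essentially the paper's argument: a double count of point--plane incidences restricted to $\pi$, using that every plane of $\Sigma$ meets $\pi\notin\Sigma$ in a full line, followed by substituting $w_\pi=q^2+q+1-b_\pi$ and $|\Sigma|=q^3-qr$. The only (immaterial) difference is that the paper counts incidences with the planes \emph{not} in $\Sigma$ other than $\pi$, obtaining $2qb_\pi+qw_\pi=(q^3+q^2+q-|\Sigma|)(q+1)$, while you count against $\Sigma$ itself; the two counts are complementary and yield the same value $b_\pi=q+(q+1)r$.
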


\begin{proof}
We prove the lemma with a similar argument as that of Lemma \ref{lem-black-fixed}. Let $\pi$ be a plane of $\PG(3,q)$ not in $\Sigma$. Let $b_\pi$ and $w_\pi$ denote the number of black and white points, respectively, in $\pi$. Then $w_\pi=q^2+q+1-b_\pi$.

By counting the incident point-plane pairs of the following set,
$$\{(x,\sigma)\ | \ x \ \mbox{is a point in $\PG(3,q)$}, \ \sigma \notin \Sigma \ , \sigma\neq \pi \ \mbox{and} \ x\in \pi\cap \sigma \},$$ we get\\ 
$$b_\pi(2q+1-1)+w_\pi (q+1-1)=(q^3+q^2+q+1-|\Sigma|-1)(q+1).$$
From Equation (\ref{eq-5},) $|\Sigma|=q^3r-qr$. The above equation simplifies to 
$$2b_\pi+w_\pi=(q+1+r)(q+1).$$
Since $w_\pi=q^2+q+1-b_\pi$, we get $b_\pi=q+(q+1)r$.
\end{proof}

\begin{corollary}\label{r-bound}
$1\leq r\leq q-1$.
\end{corollary}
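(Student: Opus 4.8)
The plan is to pin down $r$ by excluding the two extreme values $r=0$ and $r=q$, since Equation (\ref{eq-5}) already confines $r$ to the integers $0,1,\dots,q$. A preliminary observation needed throughout is that $\Sigma$ is a \emph{proper} subfamily of the set of all planes of $\PG(3,q)$: otherwise $|\Sigma|=q^3+q^2+q+1$, which is incompatible with $|\Sigma|=q^3-qr\le q^3$. Hence there is at least one plane not in $\Sigma$, so that Lemma \ref{black-tangent} is non-vacuous.

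First I would rule out $r=q$. Pick any plane $\pi_0\notin\Sigma$. By Lemma \ref{black-tangent} it contains $q+(q+1)r$ black points, which for $r=q$ equals $q^2+2q$. But a plane of $\PG(3,q)$ has only $q^2+q+1$ points, and $q^2+2q>q^2+q+1$ whenever $q\ge 2$; this contradiction forces $r\le q-1$.

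Next I would rule out $r=0$. Applying Lemma \ref{black-tangent} to a plane $\pi_0\notin\Sigma$ gives $q+(q+1)\cdot 0=q\ge 1$ black points in $\pi_0$, so there is at least one black point $x$. By the definition of a black point, $x$ lies on $q^2-q$ planes of $\Sigma$, and $q^2-q\ge 1$ for $q\ge 2$, so $x$ lies on some $\pi\in\Sigma$. Then $\pi$ contains the black point $x$, so the number of black points of $\pi$ is at least $1$; but by Lemma \ref{lem-black-fixed} together with Equation (\ref{eq-5}) that number equals $(q+1)r=0$, a contradiction. Hence $r\ge 1$, and combining the two bounds yields $1\le r\le q-1$.

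I do not expect a genuine obstacle here: both exclusions are short consistency checks on the formulas from Lemmas \ref{lem-black-fixed} and \ref{black-tangent}. The only point requiring a moment's care — and the reason I isolate it at the outset — is the existence of a plane outside $\Sigma$, needed so that Lemma \ref{black-tangent} can be invoked; this follows immediately from $|\Sigma|=q^3-qr$.
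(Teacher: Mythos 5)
Your proposal is correct, and it diverges from the paper in the way it excludes $r=0$. The exclusion of $r=q$ is identical to the paper's: a plane outside $\Sigma$ would need $q+(q+1)q=q^2+2q>q^2+q+1$ black points. For $r=0$, however, the paper substitutes $|\Sigma|=q^3$ into Equation (\ref{eq-2}) to get $b=q$, and then derives a divisibility contradiction from Equation (\ref{eq-3}) (namely that $q+1$ would have to divide $q-1$). You instead argue purely locally: a plane outside $\Sigma$ contains $q\geq 1$ black points by Lemma \ref{black-tangent}, a black point lies on $q^2-q\geq 1$ planes of $\Sigma$, and any such plane would then contain a black point, contradicting the count $(q+1)r=0$ from Lemma \ref{lem-black-fixed}. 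This is a genuinely different and arguably cleaner route: it avoids Equation (\ref{eq-3}) entirely and uses only the two fixed-count lemmas plus the definition of a black point. Your preliminary observation that $\Sigma$ is a proper subfamily (so that a plane outside $\Sigma$ exists, since $|\Sigma|=q^3-qr\leq q^3<q^3+q^2+q+1$) is a point the paper leaves implicit in both cases, and it is worth making explicit as you do; both your argument and the paper's $r=q$ argument would be vacuous without it.
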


\begin{proof}
Note that $0\leq r\leq q$. Suppose $r=0$. By Equation (\ref{eq-5}), we get $|\Sigma|=q^3$. From Equation (\ref{eq-2}), it follows that 
$$b(q-1)+qw=q^2(q^2+q+1).$$ Putting $w=q^3+q^2+q+1-b$ (Equation (\ref{eq-1})) in the above equation to get $b=q$. But then from Equation (\ref{eq-3}), $q+1$ divides $q^2(q-1)$. Since $q$ is co-prime to $q+1$, we see that $q+1$ divides $q-1$ which is a contradiction.

Suppose that $r=q$. Then by Lemma \ref{black-tangent}, any plane of $\PG(3,q)$ not in $\Sigma$ contains $q^2+2q$ black points, which is a contradiction. Hence $1\leq r\leq q-1$.
\end{proof}

\begin{lemma}\label{size-Sigma}
We have $r=1$, $b=(q+1)^2$ and $|\Sigma|=q^3-q$. In particular, every plane in $\Sigma$ contains $(q+1)$ black points.
\end{lemma}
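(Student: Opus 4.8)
The plan is to pin down $r$ by producing two independent expressions for the number $b$ of black points and comparing them; no new geometric input is needed, only the identities and lemmas already established.

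First I would eliminate $w$ and $|\Sigma|$ from the basic counting identities. Substituting $w=q^3+q^2+q+1-b$ (Equation~(\ref{eq-1})) and $|\Sigma|=q^3-qr$ (Equation~(\ref{eq-5})) into the point--plane incidence count (Equation~(\ref{eq-2})) and simplifying gives the closed form
\[
b=q+r(q^2+q+1).
\]
This is a routine substitution; the only mild care needed is that the $b(q^2-q)$ and $-bq^2$ terms combine to $-bq$, which is what makes the expression collapse. (Note that for $r=1$ this already yields $b=(q+1)^2$, which is the claimed value.)

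Next I would extract a second relation by a fresh double count of the incident pairs $\{(x,\pi)\ :\ x \text{ a black point},\ \pi\in\Sigma,\ x\in\pi\}$. Summing over black points, each lies on $q^2-q$ planes of $\Sigma$ by definition, so the count is $b(q^2-q)$; summing over planes of $\Sigma$, each contains exactly $(q+1)r$ black points by Lemma~\ref{lem-black-fixed}, and there are $|\Sigma|=q^3-qr$ of them, so the count is $(q^3-qr)(q+1)r$. Equating and cancelling the common factor $q$ yields
\[
b(q-1)=r(q+1)(q^2-r).
\]
(Alternatively one could substitute the values of $b$, $w$, $|\Sigma|$ directly into Equation~(\ref{eq-3}); this leads to the same conclusion after heavier algebra.)

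Finally I would substitute $b=q+r(q^2+q+1)$ into the last display and collect terms; using $(q^2+q+1)(q-1)=q^3-1$, the two sides should reorganize into the factorization
\[
(1-r)\bigl(q(q-1)-r(q+1)\bigr)=0.
\]
The first factor gives $r=1$. The second would force $r=q(q-1)/(q+1)$, which is impossible over the integers: since $q\equiv -1\pmod{q+1}$ we have $q(q-1)\equiv 2\pmod{q+1}$, so $q+1\nmid q(q-1)$ for any prime power $q\ge 2$ (and in any case Corollary~\ref{r-bound} already restricts $r$). Hence $r=1$, whence $b=q+(q^2+q+1)=(q+1)^2$ and $|\Sigma|=q^3-q$ by Equation~(\ref{eq-5}); the last assertion is immediate since every plane of $\Sigma$ then carries $(q+1)r=q+1$ black points. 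The main obstacle is purely computational: carrying out the term collection in this last step carefully enough that the clean factorization actually appears, together with stating the divisibility argument that kills the second factor correctly.
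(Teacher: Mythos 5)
Your argument is correct; I have checked the algebra. Substituting $w=q^3+q^2+q+1-b$ and $|\Sigma|=q^3-qr$ into Equation~(\ref{eq-2}) does collapse to $b=q+r(q^2+q+1)$; the double count of incidences between black points and planes of $\Sigma$ (each black point on $q^2-q$ such planes by definition, each such plane containing $(q+1)r$ black points by Lemma~\ref{lem-black-fixed}) gives $b(q-1)=r(q+1)(q^2-r)$ after cancelling $q$; and combining the two identities yields $(1-r)\bigl(q(q-1)-r(q+1)\bigr)=0$, whose second factor cannot vanish because $q(q-1)\equiv 2\pmod{q+1}$ and $q+1>2$. This is, however, a genuinely different route from the paper's. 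The paper never forms the black-only incidence count; instead it feeds the divisibility information of Lemma~\ref{lem-b} into the triple count~(\ref{eq-3}) to deduce that $q-1$ (or $(q-1)/2$ for odd $q$) divides $q^2-r$ or $q^3-qr-1$, narrows $r$ to the candidates $1$, $(q-1)/2$, $(q+1)/2$, $q-1$, and then eliminates each surviving candidate by separate computations (including a special check at $q=5$), with the whole argument split into the cases $q$ even and $q$ odd. Your approach replaces that case analysis with one extra double count and a clean factorization, needs neither Lemma~\ref{lem-b} nor Corollary~\ref{r-bound}, and treats all $q$ uniformly. One small caveat: your parenthetical remark that Corollary~\ref{r-bound} ``already restricts $r$'' does not by itself dispose of the second factor, since for $q\geq 3$ the value $q(q-1)/(q+1)$ does lie in the interval $[1,q-1]$; it is only the integrality argument that kills it, and that argument is correctly stated, so the proof is complete as written.
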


\begin{proof}
Note that, by Corollary \ref{r-bound}, $1\leq r\leq q-1$. If $q=2$, then $r=1$. Assume first that $q$ even, $q\geq 4$.
 
By Lemma \ref{lem-b}(i) and Equation (\ref{eq-3}), it follows that $q(q-1)$ divides $|\Sigma|(|\Sigma|-1)$. Since $|\Sigma|=q^3-qr$ (Equation (\ref{eq-5})), we have $q(q-1)$ divides $(q^3-qr)(q^3-qr-1)$, i.e, $q-1$ divides $(q^2-r)$ or $(q^3-qr-1)$. 

Note that $q^2-r=(q^2-1)-(r-1)$ and $q^3-qr-1=(q^3-q)+(q-1)-qr$. If $q-1$ divides $(q^2-r)$, then $r=1$ or $r=q$. If $q-1$ divides $q^3-qr-1$, then $r=0$ or $r=q-1$.  Since $1\leq r\leq q-1$, we have $r=1$ or $q-1$. 

Suppose $r=q-1$. putting $|\Sigma|=q^3-qr$ in Equation (\ref{eq-2}) we get that
$$b(q-1)+qw=(q^2-q+1)(q^2+q+1).$$ Since $w=q^3+q^2+q+1-b$, solving the above equation for $b$, it follows that  
$b=q^3+q-1=q^2(q+1)-(q-1)(q+1)+(q-2)$, which is a contradiction to Lemma \ref{lem-b}(i) for $q\geq 4$. Hence $r=1$ for all $q$ even.\medskip

Now assume that $q$ is odd.\medskip 
 
By Equation (\ref{eq-3'}) and Lemma \ref{lem-b}(ii), we see that $q(q-1)/2$ divides $|\Sigma|(|\Sigma|-1)$. Thus $q(q-1)/2$ divides $(q^3-qr)(q^3-qr-1)$, i.e, $(q-1)/2$ divides $(q^2-r)$ or $(q^3-qr-1)$.

As before $q^2-r=(q^2-1)-(r-1)$ and $q^3-qr-1=(q^3-q)+(q-1)-qr$. If $(q-1)/2$ divides $(q^2-r)$, then $r=1$ or $r=(q+1)/2$. If $(q-1)/2$ divides $q^3-qr-1$, then $r=(q-1)/2$ or $r=q-1$. So, $r=1,(q-1)/2, (q+1)/2$ or $q-1$. 

Suppose that $r=q-1$. As similar to $q$ even case, here also we get $b=q^3+q-1=(q^3+q^2)-(q^2-1)+(q-2)$. By Lemma \ref{lem-b}(ii), $(q+1)/2$ divides $q-2$ i.e, $(q+1)/2$ divides $q+1-3$. This gives $q=5$. So $b=5^3+5-1=129$, $r=4$ and hence $|\Sigma|=5^3-5\cdot 4=105$. Now $w=5^3+5^2+5+1-b=27$. On the other hand, putting the values of $b=129$ and $|\Sigma|=105$ in Equation (\ref{eq-3}), we get that $w=\dfrac{55}{2}$, which is a contradiction.  

Suppose that $r=(q-1)/2$. Then $|\Sigma|=q^3-q(q-1)/2$. By a similar calculation as before using Equations (\ref{eq-1}) and (\ref{eq-2}), we get that $b=(q^3+2q-1)/2=(q^3+1)/2+(q-1)$. By Lemma \ref{lem-b}(ii), $(q+1)/2$ divides $q-1$ i.e, $(q+1)/2$ divides $q+1-2$. This gives $q=3$ and hence $r=(q-1)/2=1$.  

Suppose now that $r=(q+1)/2$. Again by using Equations (\ref{eq-1}) and (\ref{eq-2}), we get that $b=(q^3+2q^2+4q+1)/2=(q^3+1)/2+q(q+1)+q$. Again by Lemma \ref{lem-b}(ii), $(q+1)/2$ divides $q$, which is a contradiction as $q$ and $q+1$ are co-prime. Hence $r=1$ for all $q$ odd.  

Hence for all $q$, we have $r=1$ and $|\Sigma|=q^3-q$. Putting the values of $|\Sigma|$ and $w=q^3+q^2+q+1-b$ in Equation (\ref{eq-2}), it follows that $b=(q+1)^2$. Since $r=1$, every plane in $\Sigma$ contains $(q+1)$ black points.
\end{proof}

\begin{corollary}\label{tangent-black}
\begin{enumerate}
\item[(i)] There are $(q+1)^2$ planes in $\PG(3,q)$ which are not in $\Sigma$.
\item[(ii)] Every plane in $\PG(3,q)$ not in $\Sigma$ contains $2q+1$ black points.
\end{enumerate}
\end{corollary}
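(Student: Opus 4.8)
\textbf{Proof proposal for Corollary \ref{tangent-black}.}
The plan is to read off both statements directly from the two preceding results, Lemma \ref{black-tangent} and Lemma \ref{size-Sigma}, so essentially no new argument is needed.

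For part (i), I would start from the total count of planes in $\PG(3,q)$, which is $q^3+q^2+q+1$, and subtract $|\Sigma|$. By Lemma \ref{size-Sigma} we have $|\Sigma|=q^3-q$, so the number of planes not in $\Sigma$ equals $q^3+q^2+q+1-(q^3-q)=q^2+2q+1=(q+1)^2$. That is the whole of (i).

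For part (ii), I would invoke Lemma \ref{black-tangent}, which asserts that a plane of $\PG(3,q)$ not in $\Sigma$ contains $q+(q+1)r$ black points, and then substitute the value $r=1$ supplied by Lemma \ref{size-Sigma}. This gives $q+(q+1)\cdot 1=2q+1$ black points, establishing (ii).

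Since both parts are immediate substitutions, there is no real obstacle here; the only thing to be careful about is that the corollary is placed after Lemma \ref{size-Sigma} in the logical order, so that the values $r=1$ and $|\Sigma|=q^3-q$ are already available. (A brief sanity check: a plane not in $\Sigma$ should be a tangent plane, which classically contains two intersecting generators, i.e.\ $2q+1$ points of the quadric, matching (ii); and there are classically $(q+1)^2$ tangent planes, matching (i). This consistency is reassuring but is not part of the proof.)
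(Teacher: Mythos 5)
Your proposal is correct and matches the paper's own proof exactly: part (i) subtracts $|\Sigma|=q^3-q$ from the total number $(q^2+1)(q+1)=q^3+q^2+q+1$ of planes, and part (ii) substitutes $r=1$ into Lemma \ref{black-tangent}. No further comment is needed.
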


\begin{proof}
(i) follows from Lemma \ref{size-Sigma} and the fact that there are $(q^2+1)(q+1)$ planes in $\PG(3,q)$. Now (ii) follows from Lemma \ref{black-tangent}, since $r=1$.
\end{proof}

We call a plane of $\PG(3,q)$ {\it tangent} if it is not a plane in $\Sigma$. By Corollary \ref{tangent-black}, every tangent plane contains $2q+1$ black points.
 
\begin{lemma}\label{lem-plane-point}
Let $l$ be a line of $\PG(3,q)$. Then the number of tangent planes through $l$ is equal to the number of black points contained in $l$.
\end{lemma}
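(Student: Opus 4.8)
The plan is to fix a line $l$ of $\PG(3,q)$, write $s$ for the number of planes of $\Sigma$ through $l$ and $t=q+1-s$ for the number of tangent planes through $l$, and let $\beta$ be the number of black points on $l$. It suffices to produce one equation relating $s$ and $\beta$, which I will obtain by a double count and then simplify using $|\Sigma|=q^3-q$ from Lemma~\ref{size-Sigma}.

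The count I would carry out is of the incident pairs
$$\{(x,\pi)\ :\ x \text{ a point of } l,\ \pi\in\Sigma,\ x\in\pi\}.$$
Counting over planes first: a plane of $\Sigma$ containing $l$ contributes all $q+1$ points of $l$, while a plane of $\Sigma$ not containing $l$ meets $l$ in exactly one point (in $\PG(3,q)$ a plane and a line either are incident or meet in a single point), so the number of pairs is $s(q+1)+(|\Sigma|-s)$. Counting over points first and invoking (P1): each of the $\beta$ black points of $l$ lies on $q^2-q$ planes of $\Sigma$ and each of the remaining $q+1-\beta$ (white) points lies on $q^2$ of them, giving $\beta(q^2-q)+(q+1-\beta)q^2$ pairs.

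Equating the two expressions and substituting $|\Sigma|=q^3-q$, the $q^3$ and $q^2$ terms cancel, and after dividing through by $q$ one is left with $q-\beta=s-1$, i.e. $\beta=q+1-s$. Since every plane on $l$ is either a plane of $\Sigma$ or a tangent plane, $q+1-s$ is precisely the number of tangent planes through $l$, which is the claim.

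There is no real obstacle here: the argument uses only property (P1), the value $|\Sigma|=q^3-q$ already established, and the elementary incidence fact above, after which the conclusion is a one-line calculation. (One could instead run the dual count over the $(q+1)^2$ tangent planes, each containing $2q+1$ black points by Corollary~\ref{tangent-black}, but the count above is the shortest route.)
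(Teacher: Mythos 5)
Your proof is correct and is essentially the paper's argument: both double-count point--plane incidences along $l$, you with the planes of $\Sigma$ (using (P1) and $|\Sigma|=q^3-q$), the paper with the complementary family of tangent planes (using that there are $(q+1)^2$ of them in total and that black, respectively white, points lie on $2q+1$, respectively $q+1$, of them). The two counts are complements of one another and reduce to the same identity $\beta=q+1-s$.
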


\begin{proof}
Let $t$ and $s$, respectively, denote the number of tangent planes through $l$ and the number of black points contained in $l$.  We count in two different ways the point-plane incident pairs,
$$\{(x,\pi)\ | \ x\in l, \ \pi\ \mbox{is a tangent plane}\ \mbox{and}\ x\in \pi \}.$$
Hence  
$$s(2q+1)+(q+1-s)(q+1)=((q+1)^2-t)\cdot 1+t\cdot (q+1).$$
It follows that $s=t$. This proves the lemma.
\end{proof}

\begin{corollary}\label{cor-line-black}
Every line of $\PG(3,q)$ contains $0,1,2$ or $q+1$ black points.
\end{corollary}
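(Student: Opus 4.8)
The plan is to read the result off directly by combining Lemma~\ref{lem-plane-point} with hypothesis (P2). Fix a line $l$ of $\PG(3,q)$ and recall that $l$ is contained in exactly $q+1$ planes of $\PG(3,q)$, each of which is either a plane of $\Sigma$ or a tangent plane. By (P2), the number of planes of $\Sigma$ through $l$ belongs to $\{0,\,q-1,\,q,\,q+1\}$; therefore the number of tangent planes through $l$, being the complement of this count within the pencil of $q+1$ planes on $l$, belongs to $\{q+1,\,2,\,1,\,0\}$ respectively.

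Now I would invoke Lemma~\ref{lem-plane-point}, which states that the number of black points on $l$ equals the number of tangent planes through $l$. Combining the two observations gives that every line of $\PG(3,q)$ contains $0$, $1$, $2$, or $q+1$ black points, which is exactly the assertion of the corollary. I do not expect any real obstacle here: the argument is a one-line consequence of the two cited results, and the only point requiring a moment's care is the elementary bookkeeping that the four admissible values $0,q-1,q,q+1$ for the number of $\Sigma$-planes on $l$ correspond, after subtraction from $q+1$, precisely to the four claimed values $q+1,2,1,0$.
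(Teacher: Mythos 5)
Your argument is correct and is exactly the paper's proof: the paper likewise notes that (P2) forces each line to lie on $0$, $1$, $2$ or $q+1$ tangent planes (by complementing within the pencil of $q+1$ planes through the line) and then applies Lemma~\ref{lem-plane-point} to convert tangent-plane counts into black-point counts. No issues.
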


\begin{proof}
From Theorem \ref{main}(P2), every line is contained in $0,1,2$ or $q+1$ tangent planes. The proof now follows from Lemma \ref{lem-plane-point} 
\end{proof}

As a consequence of Lemma \ref{lem-plane-point}, we have the following.

\begin{corollary}\label{cor-plane-1}
Every black point is contained in some tangent plane.
\end{corollary}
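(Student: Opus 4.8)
The plan is to deduce this directly from Lemma \ref{lem-plane-point}, with no extra counting needed. Let $P$ be an arbitrary black point of $\PG(3,q)$, and pick any line $l$ of $\PG(3,q)$ passing through $P$ (such a line exists since every point of $\PG(3,q)$ lies on $q^2+q+1$ lines). Because $P$ is a black point lying on $l$, the number of black points on $l$ is at least one. By Lemma \ref{lem-plane-point}, the number of tangent planes through $l$ equals the number of black points contained in $l$, hence this number is also at least one.

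Therefore there is at least one tangent plane $\pi$ with $l \subseteq \pi$. Since $P \in l$, we conclude $P \in \pi$, so $P$ lies on a tangent plane, as required. I do not expect any genuine obstacle here: the only thing one must observe is that the relevant line through $P$ automatically carries at least one black point (namely $P$ itself), which is what forces a tangent plane through it via Lemma \ref{lem-plane-point}. An alternative route would be to count tangent planes through $P$ using (P1) and Corollary \ref{tangent-black}, but the line argument above is shorter and cleaner, so that is the one I would write down.
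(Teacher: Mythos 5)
Your argument is correct and is precisely the intended deduction: the paper states this corollary as an immediate consequence of Lemma \ref{lem-plane-point} without writing out details, and your observation that any line through a black point carries at least one black point, hence lies on at least one tangent plane, is exactly that deduction.
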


\section{Black lines}
Let $B$ be the set of all black points in $\PG(3,q)$. We call a line {\it black} if all its $q+1$ points are contained in $B$.

\begin{lemma}\label{tangent-plane-black}
Let $\pi$ be a tangent plane. Then the set $\pi\cap B$ is a union of two (intersecting) black lines.
\end{lemma}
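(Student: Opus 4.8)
The plan is to fix a tangent plane $\pi$ and analyze the set $\pi \cap B$, which by Corollary~\ref{tangent-black}(ii) has exactly $2q+1$ points. The key local tool is Corollary~\ref{cor-line-black}: every line of $\PG(3,q)$, and in particular every line of $\pi$, meets $B$ in $0$, $1$, $2$, or $q+1$ points. So $\pi\cap B$ is a set of $2q+1$ points in the projective plane $\pi$ such that every line of $\pi$ meets it in $0$, $1$, $2$, or $q+1$ points. The goal is to show such a set must be the union of two lines of $\pi$ meeting in a point (note two intersecting lines contain $2(q+1)-1 = 2q+1$ points, so the cardinality is exactly right).

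First I would show there is at least one black line inside $\pi$. Suppose not: then every line of $\pi$ meets $\pi\cap B$ in $0$, $1$, or $2$ points, so $\pi\cap B$ is an arc-like set of size $2q+1$ in $\PG(2,q)$. But a $\{0,1,2\}$-intersection set (a set meeting every line in at most $2$ points) in $\PG(2,q)$ has size at most $q+2$ (for $q$ even) or $q+1$ (for $q$ odd), both far below $2q+1$; a quick counting argument through a fixed point of the set gives the bound $|\pi\cap B|\le q+2$ directly. This contradiction forces a black line $\ell_1 \subseteq \pi$. Then $|(\pi\cap B)\setminus \ell_1| = 2q+1 - (q+1) = q$. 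Take any point $P$ of $(\pi\cap B)\setminus\ell_1$. The $q+1$ lines of $\pi$ through $P$ each contain $P$ and meet $\ell_1$ in one point; I would argue that one of these lines is forced to be black: counting the black points in $\pi$ via the pencil through $P$, each line through $P$ contributes at most $q+1$ and at least $1$ (namely $P$) plus its intersection with $\ell_1$, and to reach the total $2q+1$ with the points off $\ell_1$ distributed among these $q+1$ lines, the line $PQ$ for $Q$ another point of $(\pi\cap B)\setminus\ell_1$ — if it existed with $\ge 2$ points off $\ell_1$ — together with intersection structure forces a second full black line $\ell_2$. More cleanly: the $q$ points of $(\pi\cap B)\setminus\ell_1$, together with the $\{0,1,2,q+1\}$ condition, cannot all lie in "short" secants, so they must be covered by a single black line $\ell_2$ through the appropriate point of $\ell_1$.

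The cleanest route, which I would actually carry out, is: let $\ell_1$ be a black line in $\pi$ and let $X = (\pi\cap B)\setminus \ell_1$, so $|X| = q$. Every line of $\pi$ not equal to $\ell_1$ meets $\ell_1$ in exactly one point and meets $B$ in $\le 2$ or $= q+1$ points; if it is black it contains one point of $\ell_1$ and $q$ points of $X$, i.e. all of $X$, so $X\cup\{\text{that point}\}$ is a line. So it suffices to produce one black line other than $\ell_1$. If no such line exists, every line through a fixed point $P\in X$ meets $X$ in at most... — here I count: the $q+1$ lines through $P$ partition $X\setminus\{P\}$ (size $q-1$), and each such line, being non-black, meets $B$ in at most $2$ points, hence meets $X$ in at most $2$ points, hence $X\setminus\{P\}$ in at most $1$ point; but also each meets $\ell_1$ in a point of $B$, so it has $\ge 2$ black points already (one on $\ell_1$, plus $P$), forcing it to have exactly $2$ and therefore meet $X\setminus\{P\}$ in $0$ points — contradicting $|X\setminus\{P\}| = q-1 > 0$ once we note the $q+1$ lines through $P$ must cover $X\setminus\{P\}$. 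This contradiction yields the second black line $\ell_2$, and $\ell_1\cap\ell_2\ne\emptyset$ since both lie in the plane $\pi$; moreover $\ell_1\cup\ell_2$ has $2q+1$ points, which equals $|\pi\cap B|$, so $\pi\cap B = \ell_1\cup\ell_2$ exactly.

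The main obstacle I anticipate is the bookkeeping in the step that rules out "no second black line": one must be careful that a line through $P$ could a priori be black (the very thing we want), so the case analysis is "either we find our black line, or we derive a contradiction," and the contradiction relies on the tight numerology $|X| = q$ together with the fact that every line through $P$ meeting $\ell_1$ automatically carries two black points. Getting the inequalities to close exactly — using $2q+1$, $q+1$, and $q$ — is the delicate part, but it is pure counting in $\PG(2,q)$ with no deep input beyond Corollary~\ref{cor-line-black}. I would also need the small-$q$ sanity check (e.g. $q=2$, where $2q+1 = 5$ and two intersecting lines have $5$ points), which the argument handles uniformly.
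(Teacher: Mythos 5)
Your proof is correct and follows essentially the same route as the paper: both first extract one black line from the arc bound $2q+1 > q+2$ via Corollary~\ref{cor-line-black}, and then produce the second black line by counting black points on the pencil of lines through a suitable point of $\pi\cap B$. The only difference is that you center the pencil at a point off the first black line (each such line already carries two black points, which lets you avoid a case split), whereas the paper centers it at a point on that line and argues in two cases.
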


\begin{proof}
By Corollary \ref{tangent-black}(ii), we have $|\pi\cap B|=2q+1$. Then $\pi\cap B$ is a not an arc, since an arc in any plane has at most $q+2$ points. Thus there is a line $l$ in $\pi$ such that $|l\cap B|\geq 3$. By Corollary \ref{cor-line-black}, $l\subseteq B$.

Let $x$ be a point on $l$. Since $|\pi\cap B|=2q+1$ and $l\subseteq B$, other $q$ points (points not on $l$) of $\pi\cap B$ lie on $q$ lines through $x$, different from $l$. If there is a line $m(\neq l)$ through $x$ containing two points of $\pi\cap B\setminus \{x\}$, then  $|m\cap B|\geq 3$. By Corollary \ref{cor-line-black}, $m\subseteq B$. So, in this case $\pi\cap B=l\cup m$. On the other hand, if there are two points $y$ and $z$ (different from $x$) of $B$ which lie on two different lines (different from $l$) through $x$, then the line $m:=yz$ intersects $l$ at a point different from $y$ and $z$. In particular $|m\cap B|\geq 3$. Again by Corollary \ref{cor-line-black}, $m\subseteq B$. In any case, $\pi\cap B=l\cup m$. This proves the lemma.
\end{proof} 

\begin{lemma}\label{black-line-1}
Every black point lies on at most two black lines.
\end{lemma}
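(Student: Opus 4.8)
The plan is to argue by contradiction. Suppose some black point $x$ lies on three distinct black lines $\ell_1,\ell_2,\ell_3$. A preliminary observation is that no three black lines through $x$ are coplanar: a plane containing $\ell_1\cup\ell_2\cup\ell_3$ would contain $3q+1$ black points, while by Lemma~\ref{size-Sigma} and Corollary~\ref{tangent-black} every plane of $\PG(3,q)$ contains at most $2q+1$ black points. Hence for each pair $i\neq j$ the plane $\pi_{ij}:=\langle\ell_i,\ell_j\rangle$ contains the $2q+1$ black points of $\ell_i\cup\ell_j$, so it is a tangent plane, and by Lemma~\ref{tangent-plane-black} we get $\pi_{ij}\cap B=\ell_i\cup\ell_j$.

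The core of the argument is to show that, under this assumption, \emph{every} black line passes through $x$. First I would show that every black line meets $\ell_1$. By Lemma~\ref{lem-plane-point} there are exactly $q+1$ tangent planes on the black line $\ell_1$, hence all $q+1$ planes on $\ell_1$ are tangent, and each meets $B$ in $\ell_1$ together with a second black line; for the planes $\pi_{12}$ and $\pi_{13}$ this second line is $\ell_2$, resp.\ $\ell_3$, by the first paragraph. Now let $t$ be a black line disjoint from $\ell_1$. Then $t$ is skew to $\ell_1$, so $t$ lies in no plane on $\ell_1$ and meets each of $\pi_{12},\pi_{13}$ in a single point; being a point of $t\subseteq B$ not on $\ell_1$, this point lies on $\ell_2$, resp.\ $\ell_3$, and is different from $x$. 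These two points are distinct (a common point would lie on $\ell_2\cap\ell_3=\{x\}$), so $t$ is the line they span, which lies in $\pi_{23}$; but then $t\subseteq\pi_{23}\cap B=\ell_2\cup\ell_3$ forces $t=\ell_2$ or $t=\ell_3$, contradicting $t\cap\ell_1=\emptyset$. By symmetry every black line also meets $\ell_2$ and $\ell_3$. Finally, a line meeting all of $\ell_1,\ell_2,\ell_3$ must pass through $x$: it either equals one of the $\ell_i$, or else it meets $\ell_i$ at a point $a_i\neq x$ for each $i$, whence $a_1,a_2\in\pi_{12}$ gives $t\subseteq\pi_{12}$ and then $a_3\in\ell_3\cap\pi_{12}=\{x\}$, a contradiction.

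To conclude, I would double-count. Every black point lies on some black line, since it lies on a tangent plane (Corollary~\ref{cor-plane-1}) and a tangent plane meets $B$ in two black lines (Lemma~\ref{tangent-plane-black}); by the previous step all black lines pass through $x$ and pairwise meet only at $x$, so $B$ is their union and the number of black lines is $(|B|-1)/q=\big((q+1)^2-1\big)/q=q+2$. On the other hand, counting incident pairs consisting of a black line and a tangent plane containing it: each black line lies on $q+1$ tangent planes (Lemma~\ref{lem-plane-point}), each tangent plane contains exactly two black lines (Lemma~\ref{tangent-plane-black}), and there are $(q+1)^2$ tangent planes (Corollary~\ref{tangent-black}); thus the number of black lines is $2(q+1)^2/(q+1)=2(q+1)$. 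Then $q+2=2(q+1)$, i.e.\ $q=0$, which is impossible. Hence every black point lies on at most two black lines.

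I expect the middle step — showing that every black line meets $\ell_1$ — to be the main obstacle, since it requires understanding precisely how $B$ is spread over the pencil of $q+1$ (necessarily tangent) planes on a fixed black line, and in particular that a black line skew to $\ell_1$ must cross the planes $\pi_{12}$ and $\pi_{13}$ in points of $\ell_2\setminus\{x\}$ and $\ell_3\setminus\{x\}$.
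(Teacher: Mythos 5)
Your proof is correct, but it follows a genuinely different route from the paper's. The paper's argument is a short local count at the point $x$: since $x$ is black it lies on exactly $(q^2+q+1)-(q^2-q)=2q+1$ tangent planes, and by Lemma~\ref{lem-plane-point} the $q+1$ planes through $l$ together with the $q+1$ planes through $m$ already account for all $2q+1$ of them; a third black line $k$ through $x$ would then have all $q+1\geq 3$ of its (tangent) planes passing through $l$ or $m$, which only $\langle k,l\rangle$ and $\langle k,m\rangle$ can do. Your argument instead derives a global structural consequence of the hypothesis --- that all black lines would have to be concurrent at $x$ --- and refutes it by comparing two counts of the total number of black lines ($q+2$ versus $2(q+1)$ from the incidence count with tangent planes). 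I checked the steps and they hold: the planes $\pi_{ij}$ are tangent with $\pi_{ij}\cap B=\ell_i\cup\ell_j$, a black line disjoint from $\ell_1$ is forced into $\pi_{23}$ and hence equals $\ell_2$ or $\ell_3$, and a line contained in a union of two lines must be one of them since $q+1\geq 3$. What the paper's approach buys is brevity and locality (it never needs Lemma~\ref{tangent-plane-black} beyond the existence statement, only the tangent-plane count at a point); what yours buys is the unconditional by-product that the number of black lines is exactly $2(q+1)$, a fact the paper only recovers later via the generalized-quadrangle identification in Lemma~\ref{hyperbolic}. One small presentational point: in your final case analysis you should say explicitly that a line meeting some $\ell_i$ at $x$ already passes through $x$, so the only case needing the coplanarity argument is when all three intersection points differ from $x$; as written this is implicit.
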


\begin{proof}
Let $x$ be a black point. If possible, suppose that there are three distinct black lines $l,m,k$ each of which contains $x$. 

Note that, by Lemma \ref{lem-plane-point}, each of the $q+1$ planes through $l$ as well as $m$ are tangent planes giving rise to $2q+2$ tangent planes through $x$ (with repetition allowed). Since the tangent plane $\langle l,m \rangle$ contains both $l$ and $m$, each of the $2q+1$ tangent planes through $x$ passes through either $l$ or $m$. 

On the other hand, again by  Lemma \ref{lem-plane-point}, there are $q+1$ tangent planes through $k$. Out of these $q+1$ planes through $k$, the planes $\langle k,l\rangle$ and $\langle k,m\rangle$ have already been counted as planes through $x$. Since $q\geq 2$, there is a tangent plane through $k$ (and hence through $x$), different from $\langle k,l\rangle$ and $\langle k,m\rangle$. This gives a contradiction to the fact that each of the tangent plane through $x$ is either passes through $l$ or $m$. Hence, there are at most two black lines through through $x$.   
\end{proof}

\begin{lemma}\label{black-line-2}
Every black point is contained in precisely two black lines.
\end{lemma}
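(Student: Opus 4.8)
The plan is to combine Lemma \ref{black-line-1}, which bounds the number of black lines through a black point by two, with a pair of double counts that pins the \emph{average} number of black lines through a black point at exactly two; the upper bound then forces equality everywhere.

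First I would determine the total number of black lines. Let $\mathcal{L}$ be the set of black lines. Every black line contains $q+1$ black points, so by Lemma \ref{lem-plane-point} it lies in exactly $q+1$ tangent planes. Conversely, by Lemma \ref{tangent-plane-black} each tangent plane $\pi$ satisfies $\pi\cap B=l\cup m$ for two intersecting black lines $l,m$; since in $\PG(2,q)$ any further line meets each of $l$ and $m$ in at most one point and hence has at most $2<q+1$ points in $l\cup m$ (using $q\geq 2$), the plane $\pi$ contains exactly these two black lines. Counting incident pairs (black line, tangent plane) in two ways, and using that there are $(q+1)^2$ tangent planes by Corollary \ref{tangent-black}(i), yields $|\mathcal{L}|\,(q+1)=2(q+1)^2$, so $|\mathcal{L}|=2(q+1)$.

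Next I would count incident pairs (black point, black line through it) in two ways. Summing over the black lines gives $|\mathcal{L}|\,(q+1)=2(q+1)^2$. Summing over the $b=(q+1)^2$ black points (Lemma \ref{size-Sigma}) gives $\sum_{x\in B} n_x$, where $n_x$ is the number of black lines through $x$. By Lemma \ref{black-line-1}, $n_x\leq 2$ for every $x\in B$, so $\sum_{x\in B} n_x\leq 2(q+1)^2$, with equality if and only if $n_x=2$ for every $x$. Since the two counts agree, equality holds, and therefore every black point lies on precisely two black lines.

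I do not expect a serious obstacle. The only step that needs genuine care is the assertion that a tangent plane contains exactly two black lines and not more: this is why the precise structural conclusion of Lemma \ref{tangent-plane-black} (``a union of two black lines'') is needed, rather than merely the cardinality $|\pi\cap B|=2q+1$ from Corollary \ref{tangent-black}(ii). Everything else is routine double counting together with the already established values $b=(q+1)^2$ and $(q+1)^2$ tangent planes.
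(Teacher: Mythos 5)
Your proof is correct, but it follows a genuinely different route from the paper's. You argue globally: you first count the black lines (each lies on $q+1$ tangent planes by Lemma \ref{lem-plane-point}, each tangent plane carries exactly two black lines by Lemma \ref{tangent-plane-black} plus the observation that a third line of the plane meets the union of two lines in at most $2<q+1$ points), obtaining $|\mathcal{L}|=2(q+1)$, and then a point--line incidence count combined with the bound $n_x\le 2$ from Lemma \ref{black-line-1} forces $n_x=2$ for every black point. Every step checks out, including the care you take to upgrade ``union of two black lines'' to ``exactly two black lines.'' The paper instead argues locally: starting from one black line $l$ through $x$ (whose existence it must first extract from Corollary \ref{cor-plane-1} and Lemma \ref{tangent-plane-black}), it considers the $q+1$ tangent planes $\pi_i$ through $l$, writes $B\cap\pi_i=l\cup l_i$, and uses Lemma \ref{black-line-1} to show the intersection points $l\cap l_i$ are pairwise distinct, hence sweep out all of $l$; the point $x$ then picks up its second black line $l_j$. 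Your averaging argument is cleaner in that it needs no separate existence step (the equality of the two counts supplies existence automatically) and it yields the extra fact $|\mathcal{L}|=2(q+1)$, which is the correct line count for a hyperbolic quadric and could be reused when verifying the generalized quadrangle axioms in Lemma \ref{hyperbolic}. The paper's local construction, on the other hand, directly exhibits the second line through each point of $l$ and anticipates the spread-like structure of the lines $l_1,\ldots,l_{q+1}$ that is exploited again in the proof of Lemma \ref{hyperbolic}.
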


\begin{proof}
Let $x$ be a black point and $l$ be a black line containing $x$. The existence of such a line $l$ follows from the facts that $x$ is contained in a tangent plane (Corollary \ref {cor-plane-1}) and that the set of all black points in that tangent plane is a union of two black lines (Lemma \ref{tangent-plane-black}). By Lemma \ref{lem-plane-point}, let $\pi_1,\pi_2,\ldots,\pi_{q+1}$ be the $q+1$ tangent planes through $l$. For $1\leq i\leq q+1$, by Lemma \ref{tangent-plane-black}, we have $B\cap\pi_i=l\cup l_i$ for some black line $l_i$ of $\pi_i$ different from $l$. Let $\{p_i\}=l\cap l_i$. Lemma \ref{black-line-1} implies that $p_i\neq p_j$ for $1\leq i\neq j\leq q+1$, and so $l=\{p_1,p_2,\ldots, p_{q+1}\}$. Since $x\in l$, we have $x=p_j$ for some $1\leq j\leq q+1$. Thus, $x$ is contained in precisely two black lines, namely, $l$ and $l_j$.
\end{proof}

\section{Proof of Theorem \ref{main}}

We refer to \cite{PT} for the basics on finite generalized quadrangles. Let $s$ and $t$ be positive integers. A {\it generalized quadrangle} of order $(s,t)$ is a point-line geometry $\mathcal{X}=(P,L)$ with point set $P$ and line set $L$ satisfying the following three axioms:
\begin{enumerate}
\item[(Q1)] Every line contains $s+1$ points and every point is contained in $t+1$ lines.
\item[(Q2)] Two distinct lines have at most one point in common (equivalently, two distinct points are contained in at most one line).
\item[(Q3)] For every point-line pair $(x,l)\in P\times L$ with $x\notin l$, there exists a unique line $m\in L$ containing $x$ and intersecting $l$.
\end{enumerate}

Let $\mathcal{X}=(P,L)$ be a generalized quadrangle of order $(s,t)$. Then, $|P|=(s+1)(st+1)$ and $|L|=(t+1)(st+1)$ \cite[1.2.1]{PT}. If $P$ is a subset of the point set of some projective space $PG(n,q)$, $L$ is a set of lines of $PG(n,q)$ and $P$ is the union of all lines in $L$, then $\mathcal{X}=(P,L)$ is called a {\it projective generalized quadrangle}. The points and the lines contained in a hyperbolic quadric in $\PG(3,q)$ form a projective generalized quadrangle of order $(q,1)$. Conversely, any projective generalized quadrangle of order $(q,1)$ with ambient space $\PG(3,q)$ is a hyperbolic quadric in $\PG(3,q)$, this follows from \cite[4.4.8]{PT}.\\

The following two lemmas complete the proof of Theorem \ref{main}.

\begin{lemma}\label{hyperbolic}
The points of $B$ together with the black lines form a hyperbolic quadric in $\PG(3,q)$.
\end{lemma}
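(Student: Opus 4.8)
The plan is to verify that the point-line geometry $\mathcal{X}=(B,\mathcal{L})$, where $\mathcal{L}$ is the set of all black lines, is a projective generalized quadrangle of order $(q,1)$ with ambient space $\PG(3,q)$; the result then follows immediately from the characterization \cite[4.4.8]{PT} quoted above. To do this I must check axioms (Q1), (Q2), (Q3), and the fact that $B$ is the union of all black lines.

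First, (Q2) is free: two distinct black lines, being distinct lines of $\PG(3,q)$, meet in at most one point. For (Q1), every black line contains $q+1$ points by definition, so $s=q$; and by Lemma \ref{black-line-2} every black point lies on precisely two black lines, so $t=1$. The fact that $B$ is the union of the black lines is also immediate from Lemma \ref{black-line-2}, since every black point lies on (two, hence) at least one black line. The main work is therefore axiom (Q3): given a black point $x$ and a black line $\ell$ with $x\notin\ell$, I must produce a unique black line through $x$ meeting $\ell$.

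For existence in (Q3), I would argue as follows. The plane $\pi=\langle x,\ell\rangle$ contains the black line $\ell$, hence is a tangent plane (by Lemma \ref{lem-plane-point}, every plane through a black line is tangent). By Lemma \ref{tangent-plane-black}, $\pi\cap B=\ell\cup\ell'$ for some black line $\ell'$ of $\pi$ distinct from $\ell$. Since $x\in\pi\cap B$ and $x\notin\ell$, we must have $x\in\ell'$; and $\ell'$ meets $\ell$ because two lines of a common plane always intersect. So $\ell'$ is a black line through $x$ meeting $\ell$. For uniqueness, suppose $m$ is a black line through $x$ meeting $\ell$ in a point $y$. Then $m\subseteq\langle x,y\rangle\subseteq\langle x,\ell\rangle=\pi$, so $m$ is a black line of $\pi$ through $x$; since $x\notin\ell$, $m\neq\ell$, and because $\pi\cap B=\ell\cup\ell'$ with every point of $m$ black, $m\subseteq\ell\cup\ell'$, forcing $m=\ell'$ (a line contained in the union of two distinct lines of a plane, other than one of them, must equal the other). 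This settles (Q3).

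Having verified all axioms, $\mathcal{X}=(B,\mathcal{L})$ is a generalized quadrangle of order $(q,1)$, and since $B\subseteq\PG(3,q)$, the lines of $\mathcal{L}$ are lines of $\PG(3,q)$, and $B$ is the union of these lines, it is a projective generalized quadrangle with ambient space $\PG(3,q)$. By \cite[4.4.8]{PT}, such a geometry is the set of points and lines of a hyperbolic quadric $\mathcal{Q}$ of $\PG(3,q)$, so $B$ together with the black lines forms a hyperbolic quadric, as claimed. I expect the only genuinely delicate point to be the uniqueness half of (Q3) — making sure the incidence/counting bookkeeping with Lemmas \ref{tangent-plane-black}, \ref{black-line-1} and \ref{black-line-2} is airtight — but the elementary projective-geometry observation that a full line contained in the union of two distinct coplanar lines must be one of them resolves it cleanly.
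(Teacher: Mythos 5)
Your proposal is correct, and it follows the paper's overall strategy (verify that $B$ with the black lines is a projective generalized quadrangle of order $(q,1)$ and invoke \cite[4.4.8]{PT}); the treatments of (Q1), (Q2) and the union condition coincide. Where you genuinely diverge is in the verification of (Q3), which is the only substantive axiom. The paper argues globally: writing $l=\{x_1,\dots,x_{q+1}\}$, it takes the second black line $l_i$ through each $x_i$ (Lemma \ref{black-line-2}), shows the $l_i$ are pairwise disjoint (three concurrent-in-a-plane black lines would contradict Lemma \ref{tangent-plane-black}), and then counts: the $q+1$ disjoint lines carry $(q+1)^2=|B|$ points, so they partition $B$ and the given point $x$ lies on exactly one of them. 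You argue locally: the plane $\pi=\langle x,\ell\rangle$ is tangent because it contains a black line (Lemma \ref{lem-plane-point}), so $\pi\cap B=\ell\cup\ell'$ by Lemma \ref{tangent-plane-black}, forcing $x\in\ell'$; uniqueness follows because any black line through $x$ meeting $\ell$ lies in $\pi$, hence inside $\ell\cup\ell'$, hence equals $\ell'$ (a line of $\geq 3$ points inside the union of two coplanar lines, other than the first, must be the second). Your route is shorter and gets existence and uniqueness in one stroke from the structure of a single tangent plane, without needing Lemma \ref{black-line-1} or the disjointness/counting step; the paper's route yields as a by-product the spread-like decomposition of $B$ into $q+1$ pairwise disjoint transversal lines, which is the ruling structure of the quadric made explicit. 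Both are airtight.
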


\begin{proof}
We have $|B|=b=(q+1)^2$ by Lemma \ref{size-Sigma}. It is enough to show that the points of $B$ together with the black lines form a projective generalized quadrangle of order $(q,1)$.

Each black line contains $q+1$ points of $B$. By Lemma \ref{black-line-2}, each point of $B$ is contained in exactly two black lines. Thus the axiom (Q1) is satisfied with $s=q$ and $t=1$. Clearly, the axiom (Q2) is satisfied.

We verify the axiom (Q3). Let $l=\{x_1,x_2,\ldots,x_{q+1}\}$ be a black line and $x$ be a black point not contained in $l$. By Lemma \ref{black-line-2}, let $l_i$ be the second black line through $x_i$ (different from $l$) for $1\leq i\leq q+1$. If $l_i$ and $l_j$ intersect for $i\neq j$, then the tangent plane $\pi$ generated by $l_i$ and $l_j$ contains $l$ as well. This implies that $\pi\cap B$ contains the union of three distinct black lines (namely, $l,l_i,l_j$), which is not possible by Lemma \ref{tangent-plane-black}. Thus the black lines $l_1,l_2,\ldots,l_{q+1}$ are pairwise disjoint. These $q+1$ black lines contain $(q+1)^2$ black points and hence their union must be equal to $B$. In particular, $x$ is a point of $l_j$ for unique $j\in\{1,2,\ldots,q+1\}$. Then $l_j$ is the unique black line containing $x_j$ and intersecting $l$.

From the above, it follows that the points of $B$ together with the black lines form a projective generalized quadrangle of order $(q,1)$. This completes the proof.
\end{proof}

\begin{lemma}
The set of planes in $\Sigma$ are the planes meeting $B$ in a conic.
\end{lemma}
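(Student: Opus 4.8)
The plan is to combine Lemma~\ref{hyperbolic}, which identifies $B$ with a hyperbolic quadric $\mathcal{Q}$ of $\PG(3,q)$, with the structural fact recalled in the introduction: every plane of $\PG(3,q)$ meets a hyperbolic quadric either in an irreducible conic (that is, in $q+1$ points, a secant plane) or in two intersecting generators (that is, in $2q+1$ points, a tangent plane of the quadric). So the entire statement reduces to comparing point counts and then one cardinality count.

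First I would take an arbitrary plane $\pi \in \Sigma$. By Lemma~\ref{size-Sigma}, $\pi$ contains exactly $q+1$ black points, i.e. $|\pi \cap B| = q+1$. Since $q \geq 2$ we have $q+1 \neq 2q+1$, so $\pi$ cannot meet $\mathcal{Q} = B$ in two intersecting generators; hence $\pi$ meets $\mathcal{Q}$ in an irreducible conic. This shows that every plane of $\Sigma$ is one of the planes meeting $B$ in a conic, i.e. $\Sigma$ is contained in the set of secant planes.

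To obtain the reverse inclusion I would simply count. By Lemma~\ref{size-Sigma} we have $|\Sigma| = q^3 - q$, and, as recalled in the introduction, the number of planes of $\PG(3,q)$ meeting $\mathcal{Q}$ in an irreducible conic is exactly $q^3 - q$. Since $\Sigma$ is contained in this set and both sets are finite of the same size, they coincide; hence $\Sigma$ is precisely the set of planes of $\PG(3,q)$ meeting $B$ in a conic. Together with Lemma~\ref{hyperbolic}, this completes the proof of Theorem~\ref{main}. I do not expect any genuine obstacle in this final step: it is a direct consequence of the cardinality computations carried out in Section~2 and the elementary intersection pattern of planes with a hyperbolic quadric, with the only mild point being the observation $q+1 \neq 2q+1$ that rules out the tangent-plane case.
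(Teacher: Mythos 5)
Your proposal is correct, but it reaches the conclusion by a different route than the paper. For the forward inclusion the paper argues intrinsically: it takes $\pi\in\Sigma$ with $|\pi\cap B|=q+1$ and rules out three collinear points of $\pi\cap B$ (a line with three black points would be entirely black by Corollary~\ref{cor-line-black}, would equal $\pi\cap B$, and by Lemma~\ref{lem-plane-point} would force every plane through it to be tangent, contradicting $\pi\in\Sigma$); thus $\pi\cap B$ is an oval, hence a conic section of the quadric $B$. You instead invoke the classification of plane sections of a hyperbolic quadric (conic with $q+1$ points versus two intersecting generators with $2q+1$ points) and simply observe $q+1\neq 2q+1$; this is shorter and perfectly legitimate once Lemma~\ref{hyperbolic} is in hand, though it leans on the external fact about plane sections rather than on the machinery already built in Sections~2 and~3. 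For the reverse inclusion the paper is terse (implicitly, planes not in $\Sigma$ are tangent planes meeting $B$ in two black lines by Lemma~\ref{tangent-plane-black}, so they cannot meet $B$ in a conic), whereas you give an explicit cardinality comparison $|\Sigma|=q^3-q=$ number of secant planes of $B$; your counting argument is arguably cleaner and makes the equality of the two sets fully explicit. Both arguments are sound; yours trades the paper's self-contained arc computation for two standard numerical facts about hyperbolic quadrics that the introduction already records.
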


\begin{proof}
Let $\pi$ be a plane in $\Sigma$. Note that $|\pi\cap B|=q+1$ by Lemma \ref{size-Sigma}. Suppose that $l$ is a line of $\pi$ containing three points of $\pi\cap B$. Then by Corollary \ref{cor-line-black}, $l$ is contained in $\pi\cap B$.  Since $|\pi\cap B|=q+1$, we have $l=\pi\cap B$. By Lemma \ref{lem-plane-point}, all the planes through $l$ are tangent, which is a contradiction to the fact that $\pi$ is not a tangent plane. Hence $|l\cap B|\leq 2$. Since the line $l$ in $\pi$ was arbitrary, $\pi\cap B$ is an oval in $\pi$. Since $B$ is a quadric by Lemma \ref{hyperbolic}, the planes of $\Sigma$ are exactly those that meet $B$ in an irreducible conic.    
\end{proof}

\vskip .5cm

\noindent {\bf Address}:\\

\noindent {\bf Bikramaditya Sahu} (Email: sahuba@nitrkl.ac.in)\\
 Department of Mathematics, National Institute of Technology\\
 Rourkela - 769008, Odisha, India.\\

\begin{thebibliography}{99}

\bibitem{BHJ} Barwick, S. G., Alice, M. W. Hui, Jackson, Wen-Ai, Characterising elliptic solids of $Q(4,q)$, $q$ even, Discrete Math. 343 (2020), 111857.

\bibitem{BHJS} Barwick, S. G., Alice, M. W. Hui, Jackson, Wen-Ai, Schillewaert, J., Characterising hyperbolic hyperplanes of a non-singular quadric in $\PG(4,q)$, Des. Codes Cryptogr. 88 (2020), 33--39.

\bibitem{But} Butler, D. K., A characterisation of the planes meeting a non-singular quadric of $\PG(4,q)$ in a conic, Combinatorica 33 (2013), 161--179.

\bibitem{DGDO} Di Gennaro, R., Durante N., Olanda, D., A characterization of the family of lines external to a hyperbolic quadric of $\PG(3,q)$, J. Geom. 80 (2004), 65--74.

\bibitem{Hir} Hirschfeld, J. W. P., Finite Projective Spaces of Three Dimensions, Oxford Mathematical Monographs, Oxford Science Publications. The Clarendon Press, Oxford University Press, New York, 1985.

\bibitem{IZZ} Innamorati, S., Zannetti, M., Zuanni, F., A characterization of the lines external to a hyperbolic quadric in $\PG(3,q)$, Ars Combin. 103 (2012), 3--11.

\bibitem{PT} Payne, S. E., Thas, J. A., Finite Generalized Quadrangles, Second edition, EMS Series of Lectures in Mathematics, European Mathematical Society (EMS), Z\"{u}rich, 2009.

\bibitem{PS} Pradhan P., Sahu, B., A characterisation of the family of secant lines to a hyperbolic quadric in $\PG(3,q)$, $q$ odd, Discrete Math. 343 (2020), 112044.

\end{thebibliography}
\end{document}